\def\@maketitle{%
  \newpage
  \null
  \vskip 2em%
  \begin{center}%
  \let \footnote \thanks
    {\Large\bfseries \@title \par}%
    \vskip 1.5em%
    {\normalsize
      \lineskip .5em%
      \begin{tabular}[t]{c}%
        \@author
      \end{tabular}\par}%
    \vskip 1em%
    {\normalsize \@date}%
  \end{center}%
  \par
  \vskip 1.5em}
\newtheorem{theorem}{Theorem}[section]
\newtheorem{corollary}[theorem]{Corollary}
\newtheorem{lemma}[theorem]{Lemma}
\newtheorem{proposition}[theorem]{Proposition}
\theoremstyle{definition}
\newtheorem{definition}[theorem]{Definition}
\theoremstyle{remark}
\newtheorem{remark}[theorem]{Remark}
\newtheorem*{remark*}{Remark}
\numberwithin{equation}{section}
\numberwithin{figure}{section}
\DeclareMathOperator{\codim}{codim}
\DeclareMathOperator{\sgn}{sgn}
\DeclareMathOperator{\ord}{ord}
\DeclareMathOperator{\rk}{rk}
\def\graph{\operatorname{graph}}
\def\lra{\longrightarrow}
\def\hra{\hookrightarrow}
\def\ra{\rightarrow}
\def\RR{\mathbb{R}}
\def\cD{{\mathcal{D}}}
\def\de{\delta}
\def\ov{\overline}
\def\wt{\widetilde}
\def\pa{\partial}
\newcommand{\abs}[1]{\left\lvert #1 \right\rvert}
\newcommand{\aabs}[1]{\lvert #1 \rvert}
\newcommand{\diff}[1]{\, d#1}
\def\const{\operatorname{const}}
\def\id{\operatorname{id}}
\newcommand{\FourierTransform}[2]{\mathcal{F}_{#1 \rightarrow #2}}
\newcommand{\InverseFourierTransform}[2]{\mathcal{F}^{-1}_{#1 \rightarrow #2}}
\newcommand{\NakedFourierTransform}{\mathcal{F}}
\newcommand{\NakedInverseFourierTransform}{\mathcal{F}^{-1}}
\def\eqdef{\stackrel{\mathclap{\text{def}}}{=}}
\def\Lag{\Lambda}
\def\XX{X\times X}
\def\MM{M\times M}
\def\LagXX{\Lag|_{\XX}}
\def\XXRRN{\XX \times \RR^N}
\def\XXRRzN{\XX \times \RR_0^N}
\def\MMRRN{\MM \times \RR^N}
\def\MMRRzN{\MM \times \RR_0^N}
\def\TM{T^*M}
\def\TzM{T^*_0M}
\def\TzMX{T^*_0M|_X}
\def\TMM{T^*(\MM)}
\def\TXX{T^*(\XX)}
\def\TMMXX{T^*(\MM)|_{\XX}}
\def\TzMM{T^*_0(\MM)}
\def\TzXX{T^*_0(\XX)}
\def\TzMMXX{T^*_0(\MM)|_{\XX}}
\def\phiXX{\phi|_{\XX}}
\def\aXX{a|_{\XX}}
\def\gammaPhi{\gamma_\phi}
\def\gammaPhiXX{\gamma_{\phi|_{\XX}}}
\def\CPhi{C_\phi}
\def\CPhiXX{C_{\phi|_{\XX}}}
\def\piXX{\pi_{\XX}}
\def\omXX{\omega_{\XX}}
\def\omMM{\omega_{\MM}}
\def\pt{\nu}
\newcommand{\Tpt}[2]{T_{#2}#1}
\begin{document}

\title{On traces of Fourier integral operators on submanifolds
\thanks{The work is supported by RFBR grant NN $16$-$01$-$00373$ A.}
}
\author{{P. Sipailo}\thanks{RUDN University, Russia}}

\maketitle

\begin{abstract}\noindent
Given a smooth embedding $i\colon X \hookrightarrow M$ of manifolds
and a Fourier integral operator $\Phi = \Phi(\Lambda)$ on $M$ associated with
a Lagrangian submanifold $\Lambda \subset T^*(X\times X) \setminus \{0\}$,
we consider its trace $i^!(\Lambda)$ on the submanifold $X$, i.e. the composition
$i^* \Phi i_*$, where $i^*$ and $i_*$ are the boundary and coboundary operators, respectively.
We establish the conditions under which the trace $i^!(\Phi)$ is also a Fourier integral operator,
and calculate its amplitude in canonical local coordinates.
\end{abstract}

\tableofcontents

\section{Introduction}
Given a smooth embedding $i\colon X\hra M$ of manifolds and an operator $A$ on $M$,
the \textit{trace of $A$} (see~\cite{NoSt1, NoSt2})
is an operator denoted by $i^!(A)$ on the submanifold $X$ given by the composition
$$
i^!(A) = i^* \, A \, i_*,
$$
where $i^*$ is the \textit{boundary operator}, i.e. the operator of restriction to $X$,
and $i_*$ is a dual \textit{coboundary operator}, which is defined in a dual manner
(more precisely, it takes a function on $X$ to a distribution on $M$ localized at $X$).
The concept of trace of operator  on a manifold  plays a central role in the so called
\textit{relative elliptic theory} (see~\cite{Ster9, NaSt10}), i.e. a theory associated with a pair of manifolds $(M,X)$. In particular
the trace operation is the main tool in studying the \textit{Sobolev problem},
which us a pseudodifferential problem with boundary conditions
posed on an embedded submanifold (see~\cite{Ster1,StSh53-1}).
It was quickly discovered that the trace operation behaves fairly well within the class of pseudodifferential operators (PDOs), namely the trace of a PDO is again a PDO.
But the things become more complicated if one deals with a wider class of operators.
For instance, given a $G$-operator (i.e. an operator associated with an action of a group $G$),
its trace turns out to be an operator of some very different nature:
in particular (under suitable conditions),
it is \textit{localized} at the set of fixed points of group action,
that is, it is compact outside any neighbourhood of this set (see~\cite{SaSt36}).
A similar situation occurs in studying manifolds with singularities (see~\cite{SaSt31}).

A closer analysis shows that in some situations such localized operators can be described as
Fourier integral operators (FIOs).
Besides that FIOs naturally appear in relative elliptic theory
as compositions with bundary/coboundary operators. 
Note also that the shift operators (building bricks for $G$-operators) are special cases
of quantized canonical transformations
(FIOs associated with graphs of canonical transformations),
and the latter operators arise in differential equations quite widely
(see also~\cite{SaSchSt10}, where $G$-operators
associated with groups of quantized canonical transformations were studied).

The above observations lead to the question whether the trace of a FIO on a submanifold
(or the trace of a quantized canonical transformation) is again a FIO on this submanifold.
It is easy to see that in general the answer is ``no'': the reason is that
given a trace of a FIO, the underlying Lagrangian submanifold in the cotangent bundle
may have singularities or boundary
(while classical FIOs are associated with smooth (immersed) Lagrangian submanifolds).
So the next natural question is to determine the conditions guaranteeing
that the trace of a FIO is a FIO again. This question is the main problem of the current paper.

Let us make some remarks on the present work. First, observe that
the boundary and coboundary operators are aslo special cases of FIOs,
so the subject of our study is in fact just a composition of three FIOs.
However, the classical theory of FIOs does not capture
such type of composition, since the Lagrangian submanifolds corresponding to $i^*$ and $i_*$
\textit{do intersect} the zero section in the factors of
$T^*M \times T^*X \simeq T^*X \times T^*M$.
Second, we note this paper is actually an extension of~\cite{Sip1}.
The differences are that now our traces are associated with general Lagrangian submanifolds
(while in~\cite{Sip1} these submanifolds were just isolated fibers of the cotangent bundle),
and now we deal with clean intersections rather than with transversal ones.

Finally, let us discuss the contents of the paper. First
(after recalling some basic facts about FIOs and traces of operators),
we introduce the notion of \textit{trace of Lagrangian submanifold},
associated with the embedding $i\colon X \hra M$. This is an operation, which
takes a Lagrangian submanifold $\Lag \subset \TMM$
to some submanifold of $i^!(\Lag) \subset \TXX$.
Then we formulate the main theorem of the paper, which states that the trace
of a FIO $\Phi = \Phi(\Lag)$ associated with a Lagrangian submanifold $\Lag$ under certain conditions
is a FIO associated with $i^!(\Lag)$ (and, in particular, $i^!(\Lag)$ is Lagrangian).
In other words, we describe the \textit{naturality} of the trace operation within the class of FIOs
in the sense of commutativity of the following diagram:
\begin{equation}\notag
\xymatrix{
\Lag
	\ar[r]^{}
	\ar[d]_{} &
i^!(\Lag)
	\ar[d]^{} \\
\Phi
	\ar[r]_{} &
i^!(\Phi)
}
\end{equation}
where the vertical arrows stand for the correspondence between Lagrangian manifolds
and associated FIOs. After that we calculate the amplitude of the resulting FIO
by means of canonical coordinates on $i^!(\Lag)$.
In the last section we discuss a special case of quantized canonical transformations.

The author is grateful to Prof. B.Yu.Sternin and to Prof. A.Yu.Savin for the support
and deep attention to his work.

\section{Preliminaries}

Throughout the paper, the subscript ``$0$'' means removing the zero section.
For example, $\RR^N_0 := \RR^N \setminus \{0\}$, $T_0^*M := T^*M\setminus \{0\}$, etc.

\subsection{Fourier integral operators}

Here we recall some basic facts from the theory of Fourier integral operators
(see~\cite{Hor3, Hor4, Treves2}).

Let $X$ be a smooth closed manifold of dimension $n$.
Denote by $(x,x')$ local coordinates on $\XX$,
and denote by $(x,p; \, x',p')$ the corresponding coordinates on $\TXX$.
Fix the symplectic form $\omXX = dx \wedge dp - dx' \wedge dp'$ on $\TXX$.

\begin{definition}\label{def:phaseFunction}
A function $\phi(x,x',\theta) \in C^\infty(\Gamma)$
defined on some open cone $\Gamma \subset \XXRRzN$
is called a \textit{phase function} if
\begin{itemize}
    \item[1)] it is real-valued and homogeneous of degree $1$ with respect to $\theta$-variables.
    \item[2)] its gradient $\pa_{x,x',\theta}\phi$ vanishes nowhere on $\Gamma$.
\end{itemize}
A phase function $\phi(x,x',\theta)$ is said to be \textit{clean},
if, in addition,
\begin{itemize}
    \item[3)] the set
\begin{equation}\notag
    \CPhi = \left\{\, (x,x',\theta) \in \Gamma \mid \pa_\theta \phi = 0 \,\right\}
\end{equation}
is a smooth conic manifold and the $N \times (2n + N)$-matrix
\begin{equation}\label{defeq:cleanMatrix}
\nabla(\pa_\theta\phi) =
\left(
  \begin{array}{ccc}
    \pa^2_{\theta x}\phi & \pa^2_{\theta x'}\phi & \pa^2_{\theta\theta}\phi \\
  \end{array}
\right)
\end{equation}
is of constant rank $\rk (\pa_{\theta x}\phi) = N - e$ on $\CPhi$ with
\begin{equation}\label{eq:excessCondition}
    e = \dim \CPhi - 2\dim X.
\end{equation}
\end{itemize}
\end{definition}
The set $\CPhi$ is called the \textit{critical set} of $\phi$,
and the number $e$ is called the \textit{excess} of $\phi$.
A phase function is \textit{nondegenerate} if its excess $e = 0$.

For brevity,
from now on we do not mention the domain cone $\Gamma$ explicitly and consider $\phi$ as a function
on the entire space $\XXRRzN$ (still tacitly assuming it is defined on some open cone).

\begin{definition}\label{def:assLagrangianManifold}
A smooth submanifold $\Lag \subset \TzXX$ is said to be \textit{associated}
with a phase function $\phi$
(we also say that $\Lag$ is \textit{parametrized} by $\phi$) if in some conic neighbourhood it is
defined as the range of the critical set $\CPhi$ under the map
\begin{equation}\notag
\gammaPhi\colon
\XXRRzN \longrightarrow \TzXX,
\quad
(x,x',\theta) \longmapsto (x,\pa_x\phi; \, x',-\pa_{x'}\phi),
\end{equation}
that is, $\gammaPhi(\CPhi)$ is an open subset of $\Lag$.
\end{definition}

It turns out (see, for example,~\cite{Hor3}) that given a clean phase function $\phi$,
the set $\gammaPhi(\CPhi)$ is an immersed conic submanifold in $\TzXX$,
and, moreover, it is Lagrangian with respect to the form $\omXX$.
Furthermore, in this case $\gammaPhi\colon \CPhi \ra \Lag$
is a fibration with fibers of dimension $e$ (and it is a local diffeomorphism when $e = 0$).
We call this map the \textit{parametrization} of $\Lag$ by $\phi$.

\begin{definition}\label{def:FIO}
A \textit{Fourier integral operator} $\Phi = \Phi(\Lag)$,
associated with a conic Lagrangian submanifold $\Lag \subset \TzXX$,
is a linear operator~$C^\infty_0(X) \ra \cD'(X)$
whose Schwartz kernel $K_\Phi \in \cD'(\XX)$ is locally of the form
\begin{equation}\label{eq:FIOKernel}
K_\Phi(x,x') = (2\pi)^{-(n+N)/2+e/2}
\int e^{i \phi(x,x',\theta)} \, a(x,x',\theta) \diff{\theta}.
\end{equation}
Here $\phi(x,x',\theta) \in C^\infty(\XXRRzN)$
is a clean phase function of excess $e$
parametrizing $\Lag$,
$a(x,x',\theta) \in S^{d+(n-N-e)/2}(\XXRRN)$
is an \textit{amplitude}
(a function from H\"ormander's symbol class),
the number $d$ is called the \textit{order} of $\Phi$
(we write $\ord \Phi = d$).
It is assumed that the support of $a$ is contained in the domain of $\phi$.
The integral is defined in the sense of distributions (as an oscillatory integral).
\end{definition}
Recall that the kernel~\eqref{eq:FIOKernel}
does not depend on the choice of $\phi$
modulo smooth functions (if one takes an appropriate amplitude),
provided that $\phi$ parametrizes $\Lag$
and the support of $a$ is sufficiently small.
Thus $\Phi(\Lag)$
does not depend on the way its Schwartz kernel is written down modulo smoothing operators,
but on the underlying Lagrangian submanifold $\Lag$
(in particular we can always assume that $\phi$ is nondegenerate,
since any Lagrangian manifold admits a parametrization by such a phase function).
See the details in~\cite{Hor4, Treves2}.

Now recall that distributions of the form~\eqref{eq:FIOKernel}
admit a special representation in the framework
of the theory of Maslov canonical operator~\cite{Mas1, NOSS1}).
Namely,
let the following collection of coordinate functions
\begin{equation}\label{eq:w}
w = (x_{I}, p_{\overline{I}};\,x'_{I'}, p'_{\overline{I}'}),
\end{equation}
where $I,I' \subset \{1,\dotsc,n\}$,
$\ov{I} = \{1,\dotsc,n\}\setminus I$,
$\ov{I}' = \{1,\dotsc,n\}\setminus I'$,
define a local coordinate system in some conic neighbourhood in $\Lag$
(\textit{canonical coordinates} on $\Lag$).
Then there is a smooth homogeneous function~$S(w)$ of degree $1$ on this neighbourhood
(a \textit{generating function} of $\Lag$)
such that $\Lag$ in the coordinates $(x,p;\,x',p')$ is defined by the equations
\begin{equation}\label{eq:LagCanonicalXX}
x_{\overline{I}} = -\frac{\partial S(w)}{\partial p_{\overline{I}}},\quad
p_{I} = \frac{\partial S(w)}{\partial x_{I}},\quad
x'_{\overline{I}'} = \frac{\partial S(w)}{\partial p'_{\overline{I}'}},\quad
p'_{I'} = -\frac{\partial S(w)}{\partial x'_{I'}},
\end{equation}
where $w \in \Lag$ is defined by~\eqref{eq:w}.
In this case the kernel~\eqref{eq:FIOKernel} modulo smooth functions can be expressed as
\begin{multline}\label{eq:FIOKernelCanonical}
K_\Phi(x,x') =
\InverseFourierTransform{p_{\overline{I}}}{x_{\overline{I}}}\,
\FourierTransform{p'_{\overline{I}'}}{x'_{\overline{I}'}} \,b(w) = \\ =
(2\pi)^{-(\lvert \overline{I} \rvert + \lvert \overline{I}' \rvert)/2}\iint
    e^{i S(w) + i p_{\overline{I}}x_{\overline{I}} - i p'_{\overline{I}'}x'_{\overline{I}'}} \,
    b(w) \diff{p_{\overline{I}}}\diff{p'_{\overline{I}'}},
\end{multline}
where
$b \in S^{d+(n-\lvert \overline{I} \rvert - \lvert \overline{I}' \rvert)/2}(\Lag)$,
and $\NakedFourierTransform$, $\NakedInverseFourierTransform$
stand for the direct and inverse Fourier transforms, respectively.
See also Proposition~\ref{prop:amplitude} below.

Lastly, we recall the notion of clean intersection of manifolds (see~\cite{Hor3}).
\begin{definition}\notag
Let $M_3$ be a manifold and let $M_1$ and $M_2$ be its submanifolds.
The intersection $M_1 \cap M_2 \subset M_3$ is said to be \textit{clean}
if it is a submanifold in $M_3$ and for any point $\pt \in M_1 \cap M_2$ we have
$$
    \Tpt{(M_1 \cap M_2)}{\pt} = \Tpt{M_1}{\pt} \cap \Tpt{M_2}{\pt}.
$$
\end{definition}

\subsection{Traces of operators and traces of Lagrangian manifolds}
Let $i\colon X \hookrightarrow M$ be a smooth embedding of closed manifolds.
Let $(x,y)$ be local coordinates on $M$
and let $X$ be defined in these coordinates by the equations
$X = \{y = 0\}$. Denote by $(x,y,p,q)$ the corresponding coordinates on $\TM$
and by
\begin{equation}\label{eq:coordsTMM}
    (x,y,p,q; \, x',y',p',q')
\end{equation}
the corresponding coordinates on $\TMM$.
Fix the symplectic form $\omMM$ on $T^*(M \times M)$ of the form
$$
    \omMM = dx \wedge dp + dy \wedge dq - dx' \wedge dp' - dy' \wedge dq'.
$$

The embedding $i$ induces two special operators, namely
the \textit{boundary operator} $i^*$
and the \textit{coboundary operator} $i_*$ (see~\cite{Ster1}).
The first one is an operator of restriction to the submanifold
and the second one acts in a dual manner. More explicitly,
in the above local coordinates these operators are defined as follows
\begin{equation}\label{defeq:cobb}
\begin{aligned}
& i^*\colon H^s(M) \lra H^{s-\nu/2}(X),     & \quad & u(x,y) \longmapsto u(x,0), \\
& i_*\colon H^{-s+\nu/2}(X) \lra H^{-s}(M), & \quad & u(x) \longmapsto u(x) \otimes \delta_X(y),
\end{aligned}
\end{equation}
where $\nu = \codim_M X$, and $\de_X(y)$ stands for the Dirac delta-function localized at $X$.
Both operators are continuous in the specified Sobolev spaces, provided that $s-\nu/2 > 0$.

Let $\Phi$ be an operator on the ambient manifold $M$.
\begin{definition}\notag 
The \textit{trace} $i^!(\Phi)$   %
of $\Phi$ on $X$ is the composition (see~\cite{NoSt1, NoSt2})
\begin{equation}\label{defeq:sled}
    i^!(\Phi) = i^* \, \Phi \, i_*.
\end{equation}
\end{definition}
\begin{remark}\label{remark:sledIsWellDefined}
Clearly, the trace $i^!(\Phi)$ is an operator on the submanifold $X$.
Note that the requirement $s-\nu/2 > 0$,
which limits the orders of the Sobolev spaces in~\eqref{defeq:cobb},
suggests that the composition~\eqref{defeq:sled} is not always well-defined.
Namely, $\Phi$ should be a continuous operator in the spaces $H^s(M) \ra H^{s-d}(M)$,
where
\begin{equation}\label{eq:sdnuSledIsDefined}
    s< -\nu/2, \quad s-d-\nu/2 > 0.
\end{equation}
In this case the trace $i^!(\Phi)$ is a continuous operator in the spaces
\begin{equation}\notag 
    i^!(\Phi)\colon H^{s+\nu/2}(X) \lra H^{s-d-\nu/2}(X).
\end{equation}
\end{remark}
Now let $\Lag$ be a submanifold in $\TzMM$.
\begin{definition}\notag 
The \textit{trace} $i^!(\Lag)$ of $\Lag$, associated to the embedding $i\colon X \hookrightarrow M$,
is the set
\begin{equation*}
i^!(\Lag) = \piXX(\LagXX),
\end{equation*}
where $\LagXX$ is the intersection
\begin{equation}\label{eq:LagXXisCap}
\LagXX = \Lag \, \cap \, \TMMXX
\end{equation}
and
\begin{equation}\notag 
\piXX \colon \TMMXX \longrightarrow \TXX
\end{equation}
stands for the projection induced by the embedding $i\times i\colon \XX \hra \MM$.
\end{definition}
Note that $i^!(\Lag)$ is a subset in $\TXX$.

\section{Traces of  Fourier integral operators}

\subsection{The main theorem}

Here we state the main result of the present paper.
\begin{theorem}\label{th:niceSled}
Let $\Phi = \Phi(\Lag)\colon H^s(M) \ra H^{s-d}(M)$
be a FIO associated with a Lagrangian submanifold~$\Lag \subset \TzMM$,
where $s$ and $d$ satisfy the inequalities~\eqref{eq:sdnuSledIsDefined}.
Let the following conditions hold:
\begin{enumerate}
\item[1)] the intersection~\eqref{eq:LagXXisCap} is clean;
\item[2)] one has $\Lag \cap N^*(\XX) = \emptyset$,
where $N^*(\XX)$ is the conormal bundle of $\XX \subset \MM$.
\end{enumerate}
Then $i^!(\Lag)$ is an immersed conic Lagrangian submanifold in $\TzXX$,
and $i^!(\Phi)$ is a FIO associated with it:
\begin{equation}\label{eq:niceSled}
i^!(\Phi(\Lag)) = \Phi(i^!(\Lag)).
\end{equation}
The order of $i^!(\Phi)$ is given by
\begin{equation}\label{eq:niceSledOrd}
\ord i^!(\Phi) = \ord \Phi - \dim X + \frac{1}{2}\codim X + \frac{1}{2}\dim \LagXX.
\end{equation}
\end{theorem}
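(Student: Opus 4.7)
The plan is to reduce the assertion to a local computation on the Schwartz kernel. In local coordinates $(x,y)$ on $M$ with $X = \{y = 0\}$, I would write $K_\Phi$ in the oscillatory integral form of Definition~\ref{def:FIO} with a clean phase function $\phi(x,y,x',y',\theta)$ parametrizing $\Lag$. Composing with $i_*$ integrates against $\delta(y')$, and composing with $i^*$ restricts to $\{y = 0\}$, yielding
\begin{equation*}
K_{i^!(\Phi)}(x,x') = (2\pi)^{-(m+N)/2 + e/2} \int e^{i\ti\phi(x,x',\theta)}\, \ti a(x,x',\theta)\, d\theta,
\end{equation*}
where $m = \dim M$, $\ti\phi(x,x',\theta) := \phi(x,0,x',0,\theta)$, and $\ti a := a|_{y=y'=0}$. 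This restriction of $K_\Phi$ to $\XX$ is well-defined as a distribution precisely because $\WF(K_\Phi) \subset \Lag$ misses $N^*(\XX)$ by hypothesis~2).

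The heart of the proof is to show that $\ti\phi$ is a clean phase function on $\XX\times\RR_0^N$ parametrizing $i^!(\Lag)$. For the non-vanishing of $\pa_{x,x',\theta}\ti\phi$, I would argue by contradiction: if it vanished at a point of the critical set, then $\gammaPhi$ would produce the point $(x,0,0,\pa_y\phi,\, x',0,0,-\pa_{y'}\phi) \in \Lag \cap N^*(\XX)$, forbidden by hypothesis~2). The critical set $C_{\ti\phi}$ coincides with $\CPhi \cap \{y=y'=0\}$, which, via the fibration $\gammaPhi\colon \CPhi \to \Lag$, is identified with $\gammaPhi^{-1}(\LagXX)$; this is a smooth manifold because $\LagXX$ is so by hypothesis~1). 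For the rank condition and excess, I would perform a dimension count: cleanness of the intersection yields $\dim \LagXX = 2n + e_1$ for some $e_1 \geq 0$, and combined with the $e$-dimensional fibres of $\gammaPhi$ this gives $\dim C_{\ti\phi} = 2n + e + e_1$, so the new excess is $\ti e = e + e_1$. The constancy of $\rk \nabla(\pa_\theta \ti\phi) = N - \ti e$ then follows from its analogue for $\phi$ together with the fact that the restriction $y = y' = 0$ acts only on the base variables.

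With these steps in place, $\gamma_{\ti\phi}(C_{\ti\phi}) = \piXX(\LagXX) = i^!(\Lag)$ is an immersed conic Lagrangian submanifold of $\TzXX$, and $i^!(\Phi)$ is the FIO parametrized by $\ti\phi$ with amplitude $\ti a$. Matching the prefactor $(2\pi)^{-(m+N)/2 + e/2}$ with the normalization $(2\pi)^{-(n+N)/2 + \ti e/2}$ prescribed on $X$ and comparing the required symbol orders on the amplitude then yields
\begin{equation*}
\ord i^!(\Phi) = d + \frac{m-n}{2} + \frac{\ti e - e}{2} = d - n + \frac{\nu}{2} + \frac{1}{2}\dim \LagXX,
\end{equation*}
which is precisely \eqref{eq:niceSledOrd}. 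I expect the main difficulty to lie in the excess bookkeeping—verifying that the rank condition~\eqref{defeq:cleanMatrix} survives restriction—since hypothesis~1) enters precisely there through the identification of $C_{\ti\phi}$ with the clean fibre product, while hypothesis~2) plays the dual roles of making the distributional restriction defined and ensuring that $i^!(\Lag)$ avoids the zero section.
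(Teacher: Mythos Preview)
Your plan matches the paper's proof closely: restrict the kernel to $\XX$, verify that the restricted phase $\ti\phi=\phi|_{\XX}$ is a clean phase function parametrizing $i^!(\Lag)$, and then read off the order from the symbol class of $\ti a$. One simplification the paper makes is to take $\phi$ \emph{nondegenerate} from the outset (any conic Lagrangian admits such a parametrization), so that $\gammaPhi\colon\CPhi\to\Lag$ is a diffeomorphism rather than a fibration; this costs nothing and makes your bookkeeping with the original excess $e$ disappear, so that $\ti e=e_1=\dim\LagXX-2\dim X$ directly.

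On the rank condition you are right that this is the crux, but your one-line justification (``follows from its analogue for $\phi$ together with the fact that the restriction $y=y'=0$ acts only on the base variables'') is not an argument as it stands: deleting the $y,y'$-columns of $\nabla(\partial_\theta\phi)$ can certainly lower the rank, and nothing prevents it from doing so non-uniformly. What the paper actually proves is that the intersection $C_{\ti\phi}=\CPhi\cap(\XX\times\RR^N_0)$ is itself \emph{clean} in the parameter space, i.e.\ $T_\nu C_{\ti\phi}=T_\nu\CPhi\cap T_\nu(\XX\times\RR^N_0)$ at every point; this identity is pulled back from hypothesis~1) via the diffeomorphism $\gammaPhi$ and is isolated as a separate lemma. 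Once you have it, combining with $T_\nu\CPhi=\Ker\nabla(\partial_\theta\phi)$ immediately gives $T_\nu C_{\ti\phi}=\Ker\nabla(\partial_\theta\ti\phi)$, hence the constant rank $N-\ti e$. Your phrase ``clean fibre product'' is pointing in exactly this direction; the missing step is to state and prove the parameter-space clean-intersection lemma explicitly.
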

\begin{proof}
First of all, the limitations on the action of $\Phi$ in Sobolev spaces
guarantee that $i^!(\Phi)$ is well-defined (see Remark~\ref{remark:sledIsWellDefined}).
Next, it clearly suffices to prove~\eqref{eq:niceSled} in local coordinates,
so we can restrict our attention to some small conic neighbourhood~$U \subset \TMM$
(with nonempty intersection with~$\TMMXX$) and assume that $\Lag$
is associated with some nondegenerate phase function in this neighbourhood.
For brevity, from now on we identify
all the manifolds under consideration with their neighbourhoods corresponding to $U$:
for example, we write $\TMM$ instead of $U$, $\Lag$ instead of $\Lag \cap U$, etc.
Let us also assume that $U$ is equipped with local coordinates of the form~\eqref{eq:coordsTMM}.

Under the above assumptions, we may consider $\Phi$ as an integral operator
with Schwartz kernel
\begin{equation}\label{eq:kernelPhi}
K_\Phi(x,y,x',y') = (2\pi)^{-(\dim M+N)/2}
\int
    e^{i\phi(x,y,x',y',\theta)}\,
    a(x,y,x',y',\theta) \,
\diff{\theta},
\end{equation}
where $\phi \in C^\infty(\MMRRzN)$
is a nondegenerate phase function, which parameterizes $\Lag$,
$a \in S^{d+(\dim M-N)/2}(\MMRRN)$ is an amplitude, and $d = \ord \Phi$.
Next, by a straightforward computation, we see that the trace~$i^!(\Phi(\Lag))$
is an integral operator with Schwartz kernel
\begin{equation}\label{eq:kernelSledPhi}
K_{i^!(\Phi)}(x,x') = (2\pi)^{-(\dim M+N)/2}
\int
    e^{i\phiXX(x,x',\theta)}\,
    \aXX(x,x',\theta)\,
\diff{\theta},
\end{equation}
where
$$
\phiXX(x,x',\theta) = \phi(x,0,x',0,\theta),
\quad
\aXX(x,x',\theta) = a(x,0,x',0,\theta).
$$
Note that
$\aXX$ is an amplitude of the same order as~$a$,
i.e. \begin{equation}\label{eq:aXspace}
    a(x,0,x',0,\theta) \in S^{d+(\dim M-N)/2}(\XXRRN).
\end{equation}
Thus it is enough to prove that $\phiXX$ is a clean phase function associated with $i^!(\Lag)$.
This will imply~\eqref{eq:niceSled}.

\textbf{Step 1.}
\textit{Parametrization of~$i^!(L)$}.
Let us show that~$\phiXX$ parameterizes $i^!(\Lag)$
in the sense of Definition~\ref{def:assLagrangianManifold}.
We start from recalling the properties of $\phi$.

Since $\phi$ is nondegenerate, its critical set
\begin{equation}\notag
\CPhi =
    \left\{\,
        (x,y,x',y',\theta) \mid \pa_\theta \phi = 0
    \,\right\}
\subset \MMRRzN
\end{equation}
is a smooth manifold of dimension~$\dim \CPhi = 2\dim M$,
and~$\Lag$ is the range of this submanifold under the map
\begin{equation}\notag
\begin{array}{rcl}
  \gammaPhi\colon \MMRRzN & \longrightarrow & \TzMM, \vspace{1ex} \\
  (x,y,x',y',\theta) & \longmapsto &
    (x,y,\pa_x\phi, \pa_y\phi; \, x',y',-\pa_{x'}\phi, -\pa_{y'}\phi).
\end{array}
\end{equation}
Furthermore, we can assume that the restriction of~$\gammaPhi$ to~$\CPhi$ defines a diffeomorphism
\begin{equation}\label{eq:gammaPhiDiff}
\gammaPhi \colon \CPhi \xrightarrow{~\simeq~} \Lag.
\end{equation}
This situation transfers to $\phiXX$ as follows.
First, the critical set of~$\phiXX$ is of the form
\begin{equation}\notag
\CPhiXX =
    \left\{\,
        (x,x',\theta) \mid \pa_\theta (\phiXX) = 0
    \,\right\}
\subset \XXRRzN.
\end{equation}
Note that
\begin{equation}\label{eq:CPhiXXcapXXRRzN}
\CPhiXX = \CPhi \cap \XXRRzN,
\end{equation}
and
\begin{equation}\label{eq:LagXXgammaPhiCPhiXX}
\LagXX = \gammaPhi(\CPhiXX).
\end{equation}
Now, since the intersection~\eqref{eq:LagXXisCap} is clean,
$\LagXX$ is a submanifold in $\Lag$;
hence, reverting the diffeomorphism~\eqref{eq:gammaPhiDiff},
we deduce that $\CPhiXX$ is a submanifold in $\CPhi$.
(More precisely, $\gammaPhi$ restricted to $\CPhiXX$ defines a diffeomorphism
$\CPhiXX \rightarrow \LagXX$.)

Second, the parametrization map corresponding to $\phiXX$ is
\begin{equation}\notag
\begin{array}{rcl}
  \gammaPhiXX\colon \XXRRzN & \longrightarrow & \TzXX, \vspace{1ex} \\
  (x,x',\theta) & \longmapsto & (x,\pa_x (\phiXX); \, x', -\pa_{x'} (\phiXX)).
\end{array}
\end{equation}
Evidently, it is a composition
of the restriction $\gammaPhi$ to $\XXRRzN$ and the projection $\piXX$.
Together with~\eqref{eq:CPhiXXcapXXRRzN} and~\eqref{eq:LagXXgammaPhiCPhiXX}
this implies the identity
\begin{equation}\label{eq:i^!(L)isParametrized}
    i^!(\Lag) = \gammaPhiXX(\CPhiXX).
\end{equation}
Thus $\phiXX$ is associated with $i^!(\Lag)$
in the sense of Definition~\ref{def:assLagrangianManifold}, as desired.

Summarising, we get the following commutative diagram:
\begin{equation}\label{eq:diag}
\xymatrix{
  \CPhiXX \ar[dr]_{\gammaPhiXX} \ar[r]^{\gammaPhi}
                & \LagXX \ar[d]^{\piXX}  \\
                & i^!(\Lag)             }
\end{equation}
where the $\gammaPhi$ is a diffeomorphism, and the $\piXX$
is a smooth map of constant rank.
\begin{corollary}\label{cor:i^!(Lag)isSubmanifoldInTzXX}
The set $i^!(\Lag)$ is an immersed conic submanifold in $\TzXX$.
\end{corollary}
\begin{proof}
From the commutativity of the diagram~\eqref{eq:diag} we see that $\gammaPhiXX$
is a smooth map of constant rank (equal to the rank of $\piXX$), hence
its image $i^!(\Lag)$ is an immersed submanifold in $\TXX$.
The fact that it does not intersect the zero section of $\TXX$ follows
from the hypothesis 2) of the current theorem.
Indeed, let $\{0\} \subset \TXX$ be the zero section. Then its preimage
$[\piXX]^{-1}(\{0\})$ under the projection $\piXX$
is precisely the conormal bundle of $\XX \hra \MM$,
$$
[\piXX]^{-1}(\{0\}) = N^*(\XX).
$$
So $\Lag \cap N^*(\XX) = \emptyset$ implies $i^!(\Lag) \cap \{0\} = \emptyset$.
Corollary~\ref{cor:i^!(Lag)isSubmanifoldInTzXX} follows.
\end{proof}

On the next step we study the critical set of $\phiXX$.

\textbf{Step 2.}
\textit{Properties of $\CPhiXX$.}
Here we show that the intersection~\eqref{eq:LagXXisCap}
transfers to the ``parameter space'' $\MMRRzN$ and remains clean.
We start from the subspace~$\TzMMXX$.
\begin{lemma}\notag
One has
\begin{equation}\label{eq:TzXXisParametrized}
    \gammaPhi(\XXRRzN) \subseteq \TzMMXX.
\end{equation}
\end{lemma}
\begin{proof}
This follows immediately from a direct computation.
\end{proof}
Now we can describe $\CPhiXX$.
\begin{lemma}\label{lemma:cleanessInParameterSpace}
The set~$\CPhiXX$ is a submanifold in $\MMRRzN$,
and for any point $\pt \in \CPhiXX$ we have
\begin{equation}\label{eq:TCritPhiXXcap}
    \Tpt{\CPhiXX}{\pt} = \Tpt{\CPhi}{\pt} \, \cap \, \Tpt{(\XXRRzN)}{\pt}.
\end{equation}
\end{lemma}
\begin{remark}
In other words, Lemma~\ref{lemma:cleanessInParameterSpace} claims
that the intersection~\eqref{eq:CPhiXXcapXXRRzN} is clean.
\end{remark}
\begin{proof}
We have already seen that $\CPhiXX$ is a submanifold in $\CPhi$.
Since $\CPhi$ is a submanifold in $\MMRRzN$,
it follows that $\CPhiXX$ is a submanifold in $\MMRRzN$, as claimed.
It remains to prove~\eqref{eq:TCritPhiXXcap}.

Let $\pt \in \CPhiXX$ be fixed.
Then~\eqref{eq:CPhiXXcapXXRRzN} implies the inclusion
$$
\Tpt{\CPhiXX}{\pt} \subseteq \Tpt{\CPhi}{\pt} \cap \Tpt{(\XXRRzN)}{\pt},
$$
hence, in order to establish~\eqref{eq:TCritPhiXXcap}, it suffices to obtain the inverse inclusion.
Now consider the linear map
$$
    d\gammaPhi\colon \Tpt{(\MMRRzN)}{\pt} \longrightarrow \Tpt{(\TzMM)}{\gammaPhi(\pt)},
$$
induced by $\gammaPhi$.
Its restriction to $\Tpt{\CPhi}{\pt} \subset \Tpt{(\MMRRzN)}{\pt}$ defines
an isomorphism of vector spaces
\begin{equation}\label{eq:(dgammaPhi)Iso}
d\gammaPhi\colon \Tpt{\CPhi}{\pt} \xrightarrow{~\simeq~} \Tpt{\Lag}{\gammaPhi(\pt)}.
\end{equation}
Consequently, the desired inclusion is equivalent to the following:
\begin{equation}\notag
d\gammaPhi(\Tpt{\CPhi}{\pt} \cap \Tpt{(\XXRRzN)}{\pt}) \subseteq d\gammaPhi(\Tpt{\CPhiXX}{\pt}).
\end{equation}
On the other hand, by~\eqref{eq:LagXXgammaPhiCPhiXX} we have
$$
d\gammaPhi(\Tpt{\CPhiXX}{\pt}) = \Tpt{(\LagXX)}{\gammaPhi(\pt)}.
$$
So it suffices to check that we have the inclusion
\begin{equation}\label{eq:dgammaPhi(CritPhiTXXRRzN)}
d\gammaPhi(\Tpt{\CPhi}{\pt} \cap \Tpt{(\XXRRzN)}{\pt}) \subseteq \Tpt{(\LagXX)}{\gammaPhi(\pt)}.
\end{equation}
Now note that~\eqref{eq:TzXXisParametrized} and~\eqref{eq:(dgammaPhi)Iso} imply
\begin{equation}\notag
d\gammaPhi(\Tpt{(\XXRRzN)}{\pt}) \subseteq \Tpt{(\TzMMXX)}{\gammaPhi(\pt)},
\quad d\gammaPhi(\Tpt{\CPhi}{\pt}) = \Tpt{\Lag}{\gammaPhi(\pt)}.
\end{equation}
Therefore, we have
\begin{multline}\notag
d\gammaPhi(\Tpt{\CPhi}{\pt} \cap \Tpt{(\XXRRzN)}{\pt}) \subseteq
d\gammaPhi(\Tpt{\CPhi}{\pt}) \cap d\gammaPhi(\Tpt{(\XXRRzN)}{\pt}) \subseteq \\ \subseteq
\Tpt{\Lag}{\gammaPhi(\pt)} \cap \Tpt{(\TzMMXX)}{\gammaPhi(\pt)} =
\Tpt{(\LagXX)}{\gammaPhi(\pt)},
\end{multline}
where the last equation holds because the intersection~\eqref{eq:LagXXisCap} is clean.
It follows that~\eqref{eq:dgammaPhi(CritPhiTXXRRzN)} holds,
and so Lemma~\ref{lemma:cleanessInParameterSpace} is proved.
\end{proof}

Let us now study $\phiXX$.

\textbf{Step 3.}
\textit{Properties of~$\phiXX$.}
\begin{lemma}\label{lemma:phiXXisClean}
The function~$\phiXX$ is a clean phase function with excess
\begin{equation}\label{eq:niceExcess}
    e = \dim \LagXX - 2\dim X.
\end{equation}
\end{lemma}
\begin{proof}
Let us check that $\phiXX$ meets the requirements listed in Definition~\ref{def:phaseFunction}.

1)~$\phiXX$ is real-valued and homogeneous of degree $1$ with respect to $\theta$-variables.
This is obvious.

2) The gradient $\pa_{x,x',\theta} (\phiXX)$ vanishes nowhere on $\CPhiXX$.
Indeed, otherwise the set~\eqref{eq:i^!(L)isParametrized}
would have a nonempty intersection with the zero section $\{0\} \subset \TXX$
contradicting Corollary~\ref{cor:i^!(Lag)isSubmanifoldInTzXX}.

3) Lemma~\ref{lemma:cleanessInParameterSpace} implies that $\phiXX$ has an excess
in the sense of Definition~\ref{def:phaseFunction}.
Indeed, a direct computation shows that~\eqref{eq:TCritPhiXXcap} leads to the equality
$$
    \Tpt{\CPhiXX}{\pt} = \operatorname{Ker}\Psi(\pt), \qquad \forall \pt \in \CPhiXX,
$$
where $\Psi$ is a linear operator given by the $N\times(2\dim X + N)$ matrix
\begin{equation}\label{eq:phiXXnabla}
\left(
  \begin{array}{ccc}
    \pa^2_{\theta x}(\phiXX) & \pa^2_{\theta x'}(\phiXX) & \pa^2_{\theta\theta}(\phiXX) \\
  \end{array}
\right).
\end{equation}
Consequently,
$$
    \dim \CPhiXX = 2\dim X +N - \operatorname{rk}\Psi.
$$
Therefore
\begin{equation}\label{eq:niceRk}
    \operatorname{rk}\Psi = N - e, \quad e = \dim \CPhiXX - 2\dim X.
\end{equation}
On the other hand, note that the matrix~\eqref{eq:phiXXnabla} is nothing
but the matrix~$\nabla(\partial_\theta\phiXX)$ (see~\eqref{defeq:cleanMatrix}),
so~\eqref{eq:niceRk} means that the number~$e$ is the excess of~$\phiXX$
(compare~\eqref{eq:niceRk} and~\eqref{eq:excessCondition}).
Finally, by virtue of the diffeomorphism $\CPhiXX \simeq \LagXX$
(see~\eqref{eq:diag}) we have $\dim \CPhi = \dim \LagXX$,
so~\eqref{eq:niceExcess} holds.

The properties 1)--2) mean that $\phiXX$ is a phase function,
and the property 3) means it is clean with excess~\eqref{eq:niceExcess}.
Lemma~\ref{lemma:phiXXisClean} is proved.
\end{proof}
\textbf{Step 4.}
\textit{Conclusion.}
The above arguments show that the manifold~$i^!(\Lag)$
is associated with a clean phase function $\phiXX$ of excess~\eqref{eq:niceExcess},
and the map
\begin{equation}\label{eq:phiXXSubmersion}
\gammaPhiXX\colon \CPhiXX \lra i^!(\Lag)
\end{equation}
defines the corresponding parametrization.
It follows that $i^!(\Lag)$ is an immersed Lagrangian submanifold in~$\TzXX$,
and the expression~\eqref{eq:kernelSledPhi} defines a kernel of a FIO
associated with $i^!(\Lag)$.
The formula~\eqref{eq:niceSledOrd}
follows directly from~\eqref{eq:aXspace} and~\eqref{eq:niceExcess}
(see Definition~\ref{def:FIO}).

The proof of Theorem~\ref{th:niceSled} is complete.
\end{proof}

\subsection{Calculation of amplitude}

Now we refine Theorem~\ref{th:niceSled} by representing the
kernel~\eqref{eq:kernelSledPhi} in the form of~\eqref{eq:FIOKernelCanonical}
for some amplitude $b(w)$ on $i^!(\Lag)$.

At first we need to make some preparations.
Assume that a local conic chart $U \subset \TMM$ with coordinates~\eqref{eq:coordsTMM}
is chosen\footnote{
As before, we write $\TMM$ instead of $U$, $\Lag$ instead of $\Lag \cap U$, etc.},
the conditions of Theorem~\ref{th:niceSled} are fulfilled, and
the kernel $K_\Phi$ of $\Phi$ is of the form~\eqref{eq:kernelPhi}.
Then $i^!(\Lag)$ is a Lagrangian submanifold,
and we have two natural ways to describe it.

1. On the one hand, since $i^!(\Lag)$ is Lagrangian,
there is a collection of coordinate functions
\begin{equation}\label{eq:canonicalCollectionPoint}
w = (x_{I}, p_{\overline{I}};\,x'_{I'}, p'_{\overline{I}'}),
\end{equation}
which defines a coordinate system on $i^!(\Lag)$.
Let such a collection be fixed,
and let $S(w)$ be the corresponding generating function of $i^!(\Lag)$.
Then $i^!(\Lag)$ is defined by the equations~\eqref{eq:LagCanonicalXX}.

2. On the other hand, according to the proof of Theorem~\ref{th:niceSled},
$i^!(\Lag)$ is associated with a clean phase function $\phiXX$ of excess $e$
(the latter is defined by~\eqref{eq:niceExcess}).
It follows that the parametrization~\eqref{eq:phiXXSubmersion}
is a fibration whose fibers
$$
F_w \;\eqdef\; [\gammaPhiXX]^{-1}(w) \,\cap\, \CPhiXX, \quad w \in i^!(\Lag),
$$
are smooth $e$-dimensional manifolds.
Moreover, it can be shown
(see the procedure of \textit{elimination of excess} described in~\cite{Treves2})
that there is
(possibly after a linear transformation of $\theta$-variables
and provided that the neighbourhood
$U$ is sufficiently small)
a splitting 
$$
\theta = (\theta',\theta''), \quad \theta' \in \RR^{N-e}, \, \theta'' \in \RR^e,
$$
such that the variables $\theta''$
define local coordinates in the fibers of~\eqref{eq:phiXXSubmersion},
and $\theta' \neq 0$ for all $(x,x',\theta) \in \CPhiXX$.

The next proposition connects two different expressions of the kernel $K_{i^!(\Phi)}$
corresponding to the two descriptions of $i^!(\Lag)$ given above.
Here for simplicity we assume that the amplitude $a$ in~\eqref{eq:kernelPhi}
is a \textit{classical symbol}, i.e. it admits an asymptotic expansion in decreasing orders of
homogeneity (see~\cite{Shu1}). By $a_0$ we denote the leading term for $a$.
\begin{proposition}\label{prop:amplitude}
Under the conditions of Theorem~\ref{th:niceSled}
the kernel $K_{i^!(\Phi)}$ modulo smooth functions
is of the form~\eqref{eq:FIOKernelCanonical},
where $b$ is a classical symbol and its leading term is given by
\begin{multline}\notag
b_0(w) =
(2\pi)^{-(\dim M+N-e)/2}
\int_{F_w}
e^{\frac{i\pi}{4} \sgn H_{w,\theta''}(\wt{x}_{\ov{I}},\wt{x}'_{\ov{I}'},\wt{\theta}') }
\,\times \\ \times
\abs{\det H_{w,\theta''}(\wt{x}_{\ov{I}},\wt{x}'_{\ov{I}'},\wt{\theta}')}^{-1/2}
a_0|_{\XX}(x_I,\wt{x}_{\ov{I}},x'_{I'},\wt{x}'_{\ov{I}'},\wt{\theta}',\theta'')
\diff{\theta''},
\end{multline}
where $w \in i^!(\Lag)$ is given by~\eqref{eq:canonicalCollectionPoint},
$e = \dim \LagXX - 2\dim X$,
$H_{w,\theta''}$ is the Hessian matrix of
$\phiXX(x,x',\theta)$ with respect to the variables
$(x_{\ov{I}},x_{\ov{I}'},\theta')$, and the point
$(\wt{x}_{\ov{I}},\wt{x}'_{\ov{I}'},\wt{\theta}')$ is determined by $w$ and $\theta''$
via the equation
\begin{equation}\notag
\gammaPhiXX(x_I,\wt{x}_{\ov{I}},x'_{I'},\wt{x}'_{\ov{I}'},\wt{\theta}',\theta'')
= w.
\end{equation}
\end{proposition}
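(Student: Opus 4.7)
The plan is to recover $b(w)$ from the oscillatory representation~\eqref{eq:kernelSledPhi} by inverting the canonical form~\eqref{eq:FIOKernelCanonical} via Fourier transform, and then to evaluate the resulting oscillatory integral by stationary phase. Applying the Fourier transform in $x_{\ov{I}}$ and the inverse Fourier transform in $x'_{\ov{I}'}$ to both sides of~\eqref{eq:FIOKernelCanonical}, and then substituting~\eqref{eq:kernelSledPhi} for $K_{i^!(\Phi)}$, I obtain
\begin{equation*}
e^{iS(w)} b(w) \;=\; \const \cdot \iiint e^{i\Psi(x,x',\theta;w)} \, \aXX(x,x',\theta) \, d\theta\, dx_{\ov{I}}\, dx'_{\ov{I}'},
\end{equation*}
where $\Psi = \phiXX - p_{\ov{I}} x_{\ov{I}} + p'_{\ov{I}'} x'_{\ov{I}'}$ and the tuple $w = (x_I, p_{\ov{I}}, x'_{I'}, p'_{\ov{I}'})$ plays the role of a parameter. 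The remainder of the argument is then to compute the right-hand side, modulo smooth functions, by stationary phase.

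The critical points of $\Psi$ with respect to the integration variables $(x_{\ov{I}}, x'_{\ov{I}'}, \theta)$ are characterised by $\pa_\theta\phiXX = 0$, $p_{\ov{I}} = \pa_{x_{\ov{I}}}\phiXX$ and $p'_{\ov{I}'} = -\pa_{x'_{\ov{I}'}}\phiXX$; equivalently, $(x,x',\theta)\in\CPhiXX$ and $\gammaPhiXX(x,x',\theta) = w$. By the commutative diagram~\eqref{eq:diag}, the critical set is therefore exactly the fiber $F_w$, which is $e$-dimensional. To evaluate $\Psi$ at a critical point, I apply Euler's identity to the $\theta$-homogeneous function $\phiXX$ to deduce $\phiXX \equiv 0$ on $\CPhiXX$, and then use the standard fact that the Liouville form $p\,dx - p'\,dx'$ vanishes on any conic Lagrangian, together with a direct rewriting of the defining equations~\eqref{eq:LagCanonicalXX}, to obtain $S(w) = -p_{\ov{I}} x_{\ov{I}}(w) + p'_{\ov{I}'} x'_{\ov{I}'}(w)$ on $i^!(\Lag)$. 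Combining these two identities yields $\Psi_0 = S(w)$ at every point of $F_w$.

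The final step is the stationary phase argument. Using the splitting $\theta = (\theta', \theta'')$ described before the proposition, in which $\theta''$ parametrizes $F_w$, the Hessian of $\Psi$ with respect to $(x_{\ov{I}}, x'_{\ov{I}'}, \theta')$ at fixed $\theta''$ coincides with $H_{w,\theta''}$ and is non-degenerate (this is precisely what makes the excess-elimination splitting work). Applying the standard stationary phase formula in these $|\ov{I}| + |\ov{I}'| + N - e$ transverse directions gives, for each $\theta''$, a factor $|\det H_{w,\theta''}|^{-1/2}\, e^{i\pi \sgn H_{w,\theta''}/4}\, e^{iS(w)}$ times the value of $\aXX$ at the critical point; the classicality assumption on $a$ ensures that the leading contribution is controlled by $a_0$. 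Integrating the result over the $e$-dimensional fiber $F_w$ (i.e.\ over $\theta''$) and collecting all the powers of $2\pi$ — from the two Fourier transforms, the normalisation of~\eqref{eq:kernelSledPhi}, and the stationary phase factor — then yields the stated expression for $b_0(w)$. The main technical issues are the rigorous justification of stationary phase against a symbol amplitude (handled by a microlocal cut-off near $F_w$ combined with the usual non-stationary-phase estimate in the complement) and the careful bookkeeping of the powers of $2\pi$ and of the sign $\sgn H_{w,\theta''}$.
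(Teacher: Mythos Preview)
Your proposal follows essentially the same route as the paper: apply $\FourierTransform{x_{\ov I}}{p_{\ov I}}\InverseFourierTransform{x'_{\ov I'}}{p'_{\ov I'}}$ to both representations, identify the stationary set as the fiber $F_w$, split $\theta=(\theta',\theta'')$, and apply stationary phase in the transverse $(x_{\ov I},x'_{\ov I'},\theta')$ directions before integrating over $\theta''$. Your verification that the phase vanishes on $F_w$ (via Euler's identity for $\phiXX$ and for $S$) is in fact more explicit than the paper's ``it is easy to see''.

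One point is under-argued. You assert that $H_{w,\theta''}$ is nondegenerate because ``this is precisely what makes the excess-elimination splitting work''. That is not quite right: the splitting only guarantees that, with $\theta''$ frozen, $\phiXX$ is a \emph{nondegenerate phase function}, i.e.\ that $\nabla_{x,x',\theta'}(\pa_{\theta'}\phiXX)$ has full rank on the critical set. The nondegeneracy of the full Hessian $H_{w,\theta''}$ in the $(x_{\ov I},x'_{\ov I'},\theta')$ block additionally uses that the chosen collection $w=(x_I,p_{\ov I};x'_{I'},p'_{\ov I'})$ is a genuine coordinate system on $i^!(\Lag)$. The paper isolates this in a short lemma: the composite
\[
(x,x',\theta)\longmapsto (x_I,\pa_{x_{\ov I}}\phiXX;\,x'_{I'},-\pa_{x'_{\ov I'}}\phiXX;\,\pa_{\theta'}\phiXX)
\]
has surjective differential on $\CPhiXX\cap\{\theta''=\mathrm{const}\}$ because it is $\gammaPhiXX$ followed by the canonical coordinate map (a diffeomorphism by hypothesis), augmented by the identically zero function $\pa_{\theta'}\phiXX$. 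Restricting to the $(x_{\ov I},x'_{\ov I'},\theta')$ variables then gives exactly $\det H_{w,\theta''}\neq 0$. You should insert this argument; the excess-elimination splitting alone does not deliver it.
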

\begin{proof}
We want to find an amplitude $b$ on $i^!(\Lag)$
such that the expressions~\eqref{eq:kernelSledPhi} and~\eqref{eq:FIOKernelCanonical}
define the same distribution modulo smooth functions.
Applying the composition of Fourier transforms
$\FourierTransform{x_{\overline{I}}}{p_{\overline{I}}}\,
\InverseFourierTransform{x'_{\overline{I}'}}{p'_{\overline{I}'}}$
to both of these expressions, we get
\begin{multline}\label{eq:bigAmplitudeIntegral}
b(w) =
(2\pi)^{-(\dim M+N)/2-(\lvert \overline{I} \rvert + \lvert \overline{I}' \rvert)/2}
\int
    e^{i\left[
        \phiXX(x,x',\theta)-S(w)
        -p_{\overline{I}}x_{\overline{I}} + p'_{\overline{I}'}x'_{\overline{I}'}
    \right]}\, \times \\ \times
    \aXX(x,x',\theta)\,
\diff{x_{\overline{I}}}\diff{x'_{\overline{I}'}}
\diff{\theta}.
\end{multline}
Now the rest of the proof is a computation of the integral~\eqref{eq:bigAmplitudeIntegral}
via the method of stationary phase. We refer the reader to the proof of Proposition $25.1.5'$
in~\cite{Hor4}, where an analogous integral was considered,
and only sketch out some basic points of this computation.

\textbf{Step 1.}
\textit{Determining the stationary points.}
Since the integral~\eqref{eq:bigAmplitudeIntegral} depends on the parameter $w \in i^!(\Lag)$
given by the coordinate functions~\eqref{eq:canonicalCollectionPoint},
let us assume that this parameter is fixed.
A straightforward calculation shows that a value of the collection
$(x_{\overline{I}},x'_{\overline{I}'},\theta)$ defines a stationary point
for the integral~\eqref{eq:bigAmplitudeIntegral} if the corresponding value
of the collection $(x,x',\theta)$ defines a point in $F_w$.
Thus we may assume that the integration is being performed over
some neighbourhood of the set 
\begin{equation}\notag
\wt{F}_w \; \eqdef \;
\{\,
    (x_{\overline{I}},x'_{\overline{I}'},\theta) \mid
    (x,x',\theta) \in F_w
\,\}.
\end{equation}

\textbf{Step 2.}
\textit{Reducing to a repeated integral.}
Using the splitting $\theta = (\theta',\theta'')$ and the fact that $\theta''$
define local coordinates in $F_w$,
we can rewrite the integral~\eqref{eq:bigAmplitudeIntegral} in the form
\begin{equation}\label{eq:bigAmplitudeIntegralOuter}
b(w) = \int_{F_w} c(w,\theta'')\diff{\theta''},
\end{equation}
where $c(w,\theta'')$ is given by
\begin{multline}\label{eq:bigAmplitudeIntegralInner}
c(w,\theta'') =
(2\pi)^{-(\dim M+N)/2-(\lvert \overline{I} \rvert + \lvert \overline{I}' \rvert)/2}
\int_{V_{\theta''}}
    e^{i\left[
        \phiXX(x,x',\theta',\theta'')-S(w)
        -p_{\overline{I}}x_{\overline{I}} + p'_{\overline{I}'}x'_{\overline{I}'}
    \right]}\, \times \\ \times
    \aXX(x,x',\theta',\theta'')
\diff{x_{\overline{I}}}\diff{x'_{\overline{I}'}}
\diff{\theta'},
\end{multline}
where ${V_{\theta''}}$ is the set of all $(x_{\overline{I}},x'_{\overline{I}'},\theta')$
such that
$(x_{\overline{I}},x'_{\overline{I}'},\theta)$ lie in a neighbourhood 
of $\wt{F}_w$.
We claim that the integral~\eqref{eq:bigAmplitudeIntegralOuter}
is actually over a bounded domain, therefore it converges.
Indeed, since $\theta' \neq 0$ for all $(x,x',\theta) \in F_w \subset \CPhiXX$,
the same remains true for all points in some conic neighbourhood $W$ of $F_w$.
But then the values of $\abs{\theta'}$ for $(x,x',\theta) \in W$ can not be arbitrary small;
since $W$ is conic it follows that $\abs{\theta''}$
can not be arbitrary large.
This means that the values of $\abs{\theta''}$ are bounded for all points in $F_w$, as claimed.

\textbf{Step 3.}
\textit{Calculating $c(w,\theta'')$.}
Now let us consider the integral~\eqref{eq:bigAmplitudeIntegralInner}
as depending on the parameter $\theta''$ and assume that the value $\theta'' = \const$
is fixed.
The idea is to apply to~\eqref{eq:bigAmplitudeIntegralInner} the method of stationary phase.
We make the following observations.

1) The point $(x_{\ov{I}},x'_{\ov{I}'},\theta')$
is stationary for the integral~\eqref{eq:bigAmplitudeIntegralInner}
if $(x,x',\theta) \in F_w \cap \{\theta'' = \const\}$.
It follows that this point is unique, provided that the neighbourhood $U$ is sufficiently small.

2) The phase function of the integral~\eqref{eq:bigAmplitudeIntegralInner} is given by
\begin{equation}\label{eq:phase}
(x_{\overline{I}},x'_{\overline{I}'},\theta'')
\longmapsto
\phiXX(x,x',\theta',\theta'')-S(w)
        -p_{\overline{I}}x_{\overline{I}} + p'_{\overline{I}'}x'_{\overline{I}'}.
\end{equation}
Its Hessian matrix is of the form
\begin{equation}\notag
H_{w,\theta''}(x_{\ov{I}},x'_{\ov{I}'},\theta') =
\left(
  \begin{array}{ccc}
    \pa^2_{x_{\ov{I}}\,x_{\ov{I}}\,}(\phiXX) &
    \pa^2_{x_{\ov{I}}\,x'_{\ov{I}'}\,}(\phiXX) &
    \pa^2_{x_{\ov{I}}\,\theta'\,}(\phiXX)
        \vspace{1ex}\\
    \pa^2_{x'_{\ov{I}'}\,x_{\ov{I}}\,}(\phiXX) &
    \pa^2_{x'_{\ov{I}'}\,x'_{\ov{I}'}\,}(\phiXX) &
    \pa^2_{x'_{\ov{I}'}\,\theta'\,}(\phiXX)
        \vspace{1ex}\\
    \pa^2_{\theta'\,x_{\ov{I}}\,}(\phiXX) &
    \pa^2_{\theta'\,x'_{\ov{I}'}\,}(\phiXX) &
    \pa^2_{\theta'\,\theta'\,}(\phiXX)
  \end{array}
\right)
\end{equation}
The next lemma shows that this matrix is nondegenerate at the stationary point.
\begin{lemma}
The matrix $H_{w,\theta''}(x_{\ov{I}},x'_{\ov{I}'},\theta')$ is nondegenerate
for all $(x_{\ov{I}},x'_{\ov{I}'},\theta')$ such that
$(x,x',\theta) \in F_w \cap \{\theta'' = \const\}$.
\end{lemma}
\begin{proof}
Consider the composition
\begin{equation}\notag
\CPhiXX \cap \{\, \theta'' = \const \,\}
\lra
i^!(\Lag)
\lra
\RR^{\aabs{I}}_{x_{I}} \times
\RR^{\aabs{\overline{I}}}_{p_{\overline{I}}} \times
\RR^{\aabs{I'}}_{x'_{I'}} \times
\RR^{\aabs{\overline{I}'}}_{p'_{\overline{I}'}}
\end{equation}
given by
\begin{multline}\notag
(x_{I},x_{\overline{I}},x'_{I'},x'_{\overline{I}'},\theta)
\longmapsto
(x,\pa_x (\phiXX); x',-\pa_{x'}(\phiXX))
\longmapsto \\
(x_{I},\pa_{x_{\overline{I}}}(\phiXX); \, x'_{I'},-\pa_{x'_{\overline{I}'}}(\phiXX))
\end{multline}
(the first arrow is the parametrization $\gammaPhiXX$,
and the second arrow is the coordinate map).
By construction this composition is a diffeomorphism onto its image.
We complete it to the map
\begin{equation}\notag
(x_{I},x_{\overline{I}},x'_{I'},x'_{\overline{I}'},\theta)
\longmapsto
(x_{I},\pa_{x_{\overline{I}}}(\phiXX); \, x'_{I'},-\pa_{x'_{\overline{I}'}}(\phiXX);
\, \pa_{\theta'}(\phiXX)).
\end{equation}
Since $\pa_{\theta'}(\phiXX) = 0$ on $\CPhiXX$, it follows that this map has surjective differential
for all $(x,x',\theta)$ such that
$(x,x',\theta) \in \CPhiXX \cap \{\, \theta'' = \const \,\}$.
Consequently, the map
$$
(x_{\overline{I}},x'_{\overline{I}'},\theta')
\longmapsto
(\pa_{x_{\overline{I}}}(\phiXX), \,\pa_{x'_{\overline{I}'}}(\phiXX), \,\pa_{\theta'}(\phiXX))
$$
has surjective differential for all $(x_{\overline{I}},x'_{\overline{I}'},\theta')$ such that
$(x,x',\theta) \in F_w \cap \{\theta'' = \const\}$.
Therefore its Jacobian matrix is nondegenerate at such points.
But this Jacobian matrix is equal to $H_{w,\theta''}(x_{\ov{I}},x'_{\ov{I}'},\theta')$,
and this proves the lemma.
\end{proof}

3) Finally, it is easy to see that the value of the function~\eqref{eq:phase}
is zero at the stationary point.

Now, applying the method of stationary phase to $c(w,\theta'')$ and substituting it
into~\eqref{eq:bigAmplitudeIntegralOuter}, we obtain the desired formula for $b(w)$.

The proof of Proposition~\ref{prop:amplitude} is complete.
\end{proof}

\subsection{Application to quantized canonical transformations}
In this section we apply Theorem~\ref{th:niceSled} to
{quantized canonical transformations}.

Let us recall some basic definitions.
Let $g\colon \TzM \ra \TzM$ be a homogenous canonical transformation
(i.e. a conic diffeomorphism preserving the symplectic form $\omMM$).
Then its graph
$$
\graph g = \{\, (g(w'),w') \,\} \subset \TzM \times \TzM.
$$
is a Lagrangian submanifold in $\TzMM$.
A FIO $\Phi = \Phi(\graph g)$ associated with $\graph g$ is called
a \textit{quantized canonical transformation}. One of the main features of these operators
is that they are bounded in the whole scale of Sobolev spaces.
Namely, $\Phi = \Phi(\graph g)$ acts continuously in the spaces
$$
\Phi\colon H^s(M) \lra H^{s-\ord \Phi}(M) \quad \forall s.
$$

The next corollary is a particular case of Theorem~\ref{th:niceSled}.
\begin{corollary}\label{cor:qnaturality}
Let~$\Phi = \Phi(\graph g)$ be a quantized canonical transformation
of order $\ord \Phi <-\codim X$.
Let the canonical transformation $g$ satisfy the following conditions:
\begin{enumerate}
\item[1)] the intersection $\TzMX \cap g(\TzMX) \subset \TzM$ is clean;
\item[2)] one has $N^*_0X \cap g(N^*_0X) = \emptyset$,
where $N^*X$ is the conormal bundle of $X \subset M$.
\end{enumerate}
Then $i^!(\graph g)$ is an immersed Lagrangian submanifold in $\TzXX$,
and $i^!(\Phi)$ is a FIO associated with it:
\begin{equation}\notag
    i^!(\Phi(\graph g)) = \Phi(i^!(\graph g)).
\end{equation}
\end{corollary}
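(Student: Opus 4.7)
The plan is to derive the corollary directly from Theorem~\ref{th:niceSled} applied to $\Lag = \graph g$. Three items need checking: the Sobolev mapping hypothesis of Remark~\ref{remark:sledIsWellDefined}, cleanness of $\graph g \cap \TMMXX$, and the disjointness $\graph g \cap N^*(\XX) = \emptyset$.

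The Sobolev condition is immediate. A quantized canonical transformation $\Phi(\graph g)$ acts continuously $H^s(M) \ra H^{s-d}(M)$ for every $s \in \RR$, where $d = \ord \Phi$. The inequalities $s < -\nu/2$ and $s - d - \nu/2 > 0$ of~\eqref{eq:sdnuSledIsDefined} admit a common solution exactly when $d < -\nu = -\codim X$, which is the assumed hypothesis $\ord \Phi < -\codim X$. Fixing any admissible $s$ makes $i^!(\Phi)$ well-defined in the sense of Remark~\ref{remark:sledIsWellDefined}.

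Next I would translate the two geometric conditions. A point $(g(w'), w') \in \graph g$ lies in $\TMMXX$ iff both $w' \in \TzMX$ and $g(w') \in \TzMX$, and the projection $(g(w'), w') \mapsto g(w')$ identifies $\graph g \cap \TMMXX$ diffeomorphically with $g(\TzMX) \cap \TzMX \subset \TzM$; a short tangent-space computation, using that $dg$ is a linear isomorphism, shows that cleanness in one description is equivalent to cleanness in the other, so condition~(1) is exactly the clean intersection hypothesis of Theorem~\ref{th:niceSled}. Similarly, in the coordinates of~\eqref{eq:coordsTMM} the conormal bundle $N^*(\XX) \subset \TMM$ is cut out by $y = y' = 0$ and $p = p' = 0$, so it coincides with $N^*X \times N^*X$; hence $\graph g \cap N^*(\XX) \cap \TzMM \neq \emptyset$ iff there exists $w' \neq 0$ with both $w'$ and $g(w')$ in $N^*X$, which, since $g$ is a diffeomorphism of $\TzM$, is exactly $N^*_0 X \cap g(N^*_0 X) \neq \emptyset$. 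Thus condition~(2) is equivalent to $\graph g \cap N^*(\XX) = \emptyset$.

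With all hypotheses of Theorem~\ref{th:niceSled} verified for $\Lag = \graph g$, the theorem applies directly and yields both assertions of the corollary at once: $i^!(\graph g)$ is an immersed Lagrangian submanifold in $\TzXX$, and $i^!(\Phi(\graph g)) = \Phi(i^!(\graph g))$. The only real work is the geometric translation above, in particular the tangent-space verification that cleanness transfers across the graph correspondence and the identification of $N^*(\XX)$ with $N^*X \times N^*X$; I do not expect any serious obstacle beyond bookkeeping in the adapted coordinates~\eqref{eq:coordsTMM}.
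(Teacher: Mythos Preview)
Your proposal is correct and follows essentially the same route as the paper: verify the Sobolev compatibility from $\ord\Phi<-\codim X$, translate condition~(1) into cleanness of $\graph g\cap\TMMXX$ via the graph correspondence $(g(w'),w')\leftrightarrow g(w')$ (with the accompanying tangent-space check), translate condition~(2) using $N^*(\XX)=N^*X\times N^*X$, and then invoke Theorem~\ref{th:niceSled}. The only cosmetic difference is that the paper packages the two geometric translations as abstract lemmas about a diffeomorphism $f\colon Y\to Y$ and a submanifold $Z\subset Y$ (Lemmas~\ref{lemma:graphCleanIntersection} and~\ref{lemma:graphEmptyIntersection}), whereas you argue directly in the specific setting; the content of the ``short tangent-space computation'' you allude to is exactly the paper's verification of~\eqref{eq:graphfZZtangent}.
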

\begin{proof}
The requirement $\ord \Phi <-\codim X$ guarantees that the trace $i^!(\Phi)$ is well-defined.
Let us show that the conditions 1) and 2) in Corollary~\ref{cor:qnaturality}
imply the conditions 1) and 2) in Theorem~\ref{th:niceSled}.

\textbf{Step 1.} \textit{Condition 1).}
We are going to check that the intersection
\begin{equation}\notag
(\graph g)|_{\XX} = \graph g \, \cap \, \TzMMXX
\end{equation}
is clean. To simplify the notation let us prove this fact
in a slightly more abstract setting.
\begin{lemma}\label{lemma:graphCleanIntersection}
Let~$f\colon Y \ra Y$ be a diffeomorphism between smooth manifolds, and let $Z \subset Y$
be a submanifold.
If the intersection $Z \cap f(Z) \subset Y$ is clean
then so is the intersection $\graph f \cap Z\times Z \subset Y \times Y$.
\end{lemma}
\begin{proof}
1) First, let us show that the set $\graph f \cap Z \times Z$ is a submanifold in $Y \times Y$.
Indeed, denote by $(f,\id)$ the map
\begin{equation}\notag
	(f,\id)\colon Y \lra Y \times Y, \quad \pt \longmapsto (f(\pt),\pt).
\end{equation}
This map is clearly a diffeomorphism $Y \ra \graph f$, and, moreover,
\begin{equation}\label{eq:graphfZZRepresent}
\graph f \cap Z \times Z = (f,\id)\,[f^{-1}(Z \cap f(Z))].
\end{equation}
Since the intersection $Z \cap f(Z)$ is clean, it is a submanifold in $Y$;
hence, since $f$ is a diffeomorphism, $f^{-1}(Z \cap f(Z))$ is a submanifold in $Y$ as well.
Using~\eqref{eq:graphfZZRepresent} we deduce from this that $\graph f \cap Z \times Z$
is a submanifold in $Y \times Y$, as claimed.

2) Let $\pt \in Z$ be a fixed point such that $f(v) \in Z$.
We claim that the following holds:
\begin{equation}\label{eq:graphfZZtangent}
\Tpt{(\graph f \cap Z \times Z)}{f(\pt) \times \pt} =
\Tpt{(\graph f)}{f(\pt) \times \pt} \cap \Tpt{(Z \times Z)}{f(\pt) \times \pt}.
\end{equation}
Indeed, firstly note that
$$
	\Tpt{(\graph f)}{f(\pt) \times \pt} = \graph df
$$
(by $df$ we denote the linear map $\Tpt{Y}{\pt} \ra \Tpt{Y}{f(\pt)}$
induced by $f$), so we have
\begin{multline}\notag
\Tpt{(\graph f)}{f(\pt) \times \pt} \,\cap\, \Tpt{(Z \times Z)}{f(\pt) \times \pt} =
\graph df \,\cap\, (\Tpt{Z}{f(\pt)} \times \Tpt{Z}{\pt}) = \\
= (df,\id)\,[(df)^{-1}\,(\Tpt{Z}{f(\pt)} \cap df(\Tpt{Z}{\pt}))].
\end{multline}
(The last equation is analogous to~\eqref{eq:graphfZZRepresent}.)
Secondly, since the intersection $Z \cap f(Z)$ is clean, we have
$$
\Tpt{Z}{f(\pt)} \cap df(\Tpt{Z}{\pt}) = \Tpt{Z}{f(\pt)} \cap \Tpt{(f(Z))}{f(\pt)} =
\Tpt{(Z \cap f(Z))}{f(\pt)}.
$$
Therefore
\begin{multline}\notag
\Tpt{(\graph f)}{f(\pt) \times \pt} \,\cap\, \Tpt{(Z \times Z)}{f(\pt) \times \pt}
= (df,\id)\,[(df)^{-1}\,(\Tpt{Z}{f(\pt)} \cap df(\Tpt{Z}{\pt}))] = \\
= (df,\id)\,[(df)^{-1}\,(\Tpt{(Z \cap f(Z)}{f(\pt)})]
= d(f,\id)\,[\Tpt{(f^{-1}(Z \cap f(Z)))}{\pt}] = \\
= \Tpt{((f,\id)\,[f^{-1}(Z \cap f(Z))])}{f(\pt) \times \pt}
= \Tpt{(\graph f \cap Z \times Z)}{f(\pt) \times \pt}.
\end{multline}
(For the last equality we have used~\eqref{eq:graphfZZRepresent} directly.)
Thus we have got~\eqref{eq:graphfZZtangent}.

Lemma~\ref{lemma:graphCleanIntersection} is proved.
\end{proof}
Setting $Y = \TzM$, $Z = \TzMX$, $f = g$
and applying Lemma~\ref{lemma:graphCleanIntersection}, we see
that the condition 1) of Theorem~\ref{th:niceSled} is satisfied.

\textbf{Step 2.} \textit{Condition 2).}
Let us check that 
$\graph g \cap N^*(X\times X) = \emptyset$.
We use our abstract notation again.
\begin{lemma}\label{lemma:graphEmptyIntersection}
Let~$f\colon Y \ra Y$ is a diffeomorphism of smooth manifolds
and let~$Z \subset Y$ be a submanifold.
If $Z \cap f(Z) = \emptyset$ then $\graph f \cap Z\times Z = \emptyset$.
\end{lemma}
\begin{proof}
Obviously follows from~\eqref{eq:graphfZZRepresent}.
\end{proof}
Setting $Y = \TzM$, $Z = N^*_0X$, $f = g$,
and applying Lemma~\ref{lemma:graphEmptyIntersection}, we see
that the condition 2) of Theorem~\ref{th:niceSled} is satisfied as well.

Now Theorem~\ref{th:niceSled} implies Corollary~\ref{cor:qnaturality}.
\end{proof}

\end{document}